\newfont{\footsc}{cmcsc10 at 8truept}
\newfont{\footbf}{cmbx10 at 8truept}
\newfont{\footrm}{cmr10 at 10truept}
\newtheorem{theorem}{Theorem}
\newtheorem{conjecture}[theorem]{Conjecture}
\newtheorem{proposition}[theorem]{Proposition}
\newenvironment{proof}[1][Proof]{\noindent{\textbf {#1}  }}  {\hfill$\blacksquare$\bigskip}
\def\blfootnote{\xdef\@thefnmark{}\@footnotetext}
\begin{document}

\title{Maxima of the $Q$-index: forbidden $4$-cycle and $5$-cycle}
\author{Maria Aguieiras A. de Freitas\thanks{Federal University of Rio de Janeiro,
Brazil; \textit{email: maguieiras@im.ufrj.br}} , Vladimir
Nikiforov\thanks{Department of Mathematical Sciences, University of Memphis,
Memphis TN 38152, USA; \textit{email: vnikiforv@memphis.edu}} \ and Laura
Patuzzi\thanks{Federal University of Rio de Janeiro, Brazil; \textit{email:}
\textit{laura@im.ufrj.br}}}
\date{}
\maketitle

\begin{abstract}
This paper gives tight upper bounds on the largest eigenvalue $q\left(
G\right)  $ of the signless Laplacian of graphs with no $4$-cycle and no $5$-cycle.

If $n$ is odd, let $F_{n}$ be the friendship graph of order $n;$ if $n$ is
even, let $F_{n}$ be $F_{n-1}$ with an edge hanged to its center. It is shown
that if $G$ is a graph of order $n\geq4,$ with no $4$-cycle, then
\[
q\left(  G\right)  <q\left(  F_{n}\right)  ,
\]
unless $G=F_{n}.$

Let $S_{n,k}$ be the join of a complete graph of order $k$ and an independent
set of order $n-k.$ It is shown that if $G$ is a graph of order $n\geq6,$ with
no $5$-cycle, then
\[
q\left(  G\right)  <q\left(  S_{n,2}\right)  ,
\]
unless $G=S_{n,k}.$

It is shown that these results are significant in spectral extremal graph
problems. Two conjectures are formulated for the maximum $q\left(  G\right)  $
of graphs with forbidden cycles.\medskip

\textbf{Keywords: }\emph{signless Laplacian; spectral radius; forbidden
cycles; extremal problem.}

\textbf{AMS classification: }05C50

\end{abstract}

\section{Introduction}

Given a graph $G,$ the $Q$-index of $G$ is the largest eigenvalue $q\left(
G\right)  $ of its signless Laplacian $Q\left(  G\right)  $. In this note we
determine the maximum $Q$-index of graphs with no cycles of length $4$ and
$5.$ These two extremal problems turn out to have particular meaning, so
before getting to the new theorems, we propose an introductory discussion.

Recall that the central problem of the classical extremal graph theory is of
the following type: \medskip

\textbf{Problem A }\emph{Given a graph }$F,$\emph{ what is the maximum number
of edges of a graph of order }$n,$\emph{ with no subgraph isomorphic to
}$F?\medskip$

Such problems are fairly well understood nowadays; see, e.g., \cite{Bol78} for
comprehensive discussion and \cite{Nik11} for some newer results. Let us
mention only two concrete results, which will be used for case study later in
the paper. Write $T_{2}\left(  n\right)  $ for the complete bipartite graph
with parts of size $\left\lfloor n/2\right\rfloor $ and $\left\lceil
n/2\right\rceil ,$ and note that $T_{2}\left(  n\right)  $ contains no odd
cycles. First, Mantel's theorem \cite{Man07} reads as: \emph{if }$G$\emph{ is
a graph of order }$n$ and \emph{ }$e\left(  G\right)  >e\left(  T_{2}\left(
n\right)  \right)  ,$ \emph{then }$G$ \emph{contains a triangle}$.$ This
theorem seems as good as one can get, but Bollob\'{a}s in \cite{Bol78}, p.
150, was able to deduce a stronger conclusion from the same premise: \emph{if
}$G$\emph{ is a graph of order }$n$\emph{ and }$e\left(  G\right)  >e\left(
T_{2}\left(  n\right)  \right)  $\emph{, then }$G$\emph{ contains a cycle of
length }$t$\emph{ for every }$3\leq t\leq\left\lceil n/2\right\rceil .$ Below
we study similar problems for spectral parameters of graphs.

During the past two decades, subtler versions of Problem A have been
investigated, namely for the spectral radius $\mu\left(  G\right)  $, i.e.,
the largest eigenvalue of the adjacency matrix of a graph $G.$ In this new
class of problems the central question is the following one.\medskip

\textbf{Problem B }\emph{Given a graph }$F,$\emph{ what is the maximum }%
$\mu\left(  G\right)  $\emph{ of a graph }$G$ \emph{of order }$n,$\emph{ with
no subgraph isomorphic to }$F.\medskip$

In fact, in recent years much of the classical extremal graph theory, as known
from \cite{Bol78}, has been recast for the spectral radius with an astonishing
preservation of detail; see \cite{Nik11} for a survey and discussions. In
particular, Mantel's theorem has been translated as: \emph{if }$G$\emph{ is a
graph of order }$n$ \emph{and }$\mu\left(  G\right)  >\mu\left(  T_{2}\left(
n\right)  \right)  $\emph{, then }$G$ \emph{contains a triangle}$.$\emph{
}Moreover, as shown in \cite{Nik08}, the result of Bollob\'{a}s can be
recovered likewise: \emph{if }$G$\emph{ is a graph of sufficiently large order
}$n$ \emph{and }$\mu\left(  G\right)  >\mu\left(  T_{2}\left(  n\right)
\right)  ,$\emph{ then }$G$\emph{ contains a cycle of length }$t$\emph{ for
every }$3\leq t\leq n/320.$ The constant $1/320$ undoubtedly can be improved,
but its best value is not known yet.

However, not all extremal results about the spectral radius are similar to the
corresponding edge extremal results. The most notable known exceptions are for
the spectral radius of graphs with forbidden even cycles. Thus, it has been
long known (\cite{KST54},\cite{Rei58}) that the number of edges in a graph $G$
of order $n,$ with no $4$-cycle, satisfies
\[
e\left(  G\right)  \leq\frac{1}{4}n\left(  1+\sqrt{4n-3}\right)  ,
\]
and F\"{u}redi \cite{Fur88} proved that, for $n$ sufficiently large, equality
holds if and only if $G$ is a the polarity graph of Erd\H{o}s and Renyi
\cite{ErRe62}. By contrast, it has been shown in \cite{Nik07}, \cite{Nik09}
and \cite{ZhWa12}, that the spectral radius of a graph $G$ of order $n,$ with
no $4$-cycle, satisfies
\[
\mu\left(  G\right)  \leq\mu\left(  F_{n}\right)  ,
\]
where for $n$ odd, $F_{n}$ is a union of $\left\lfloor n/2\right\rfloor $
triangles sharing a single common vertex, and for $n$ even, $F_{n\text{ }}$ is
obtained by hanging an edge to the common vertex of $F_{n-1}.$ Note that for
odd $n$ the graph $F_{n}$ is known also as the \emph{friendship} or
\emph{windmill} graph.

For longer even cycles the results for edges and the spectral radius deviate
even further. Indeed, if $G$ is a graph of order $n,$ with no cycle of length
$2k,$ finding the maximum number of edges in $G$ is one of the most difficult
problems in extremal graph theory, far from solution for any $k\geq4$. By
contrast, the maximum spectral radius is known within $1/2$ of its best
possible value, although the problem is not solved completely - see Conjecture
\ref{con} at the end.

The present paper contributes to an even newer trend in extremal graph theory,
namely to the study of variations of Problem A for the $Q$-index of graphs,
where the central question is the following one.\medskip\medskip

\textbf{Problem C }\emph{Given a graph }$F,$\emph{ what is the maximum }%
$Q$\emph{-index a graph }$G$ \emph{of order }$n,$\emph{ with no subgraph
isomorphic to }$F?\medskip$

This question has been resolved for forbidden $K_{r},$ $r\geq3$ in
\cite{AbNi13} and \cite{HJZ13}, obtaining stronger versions of the Tur\'{a}n
theorem. In general, extremal problems about the $Q$-index turn out to be more
difficult than for the spectral radius and show greater deviation from their
edge versions. For example, Mantel's theorem for the $Q$-index reads as:
\emph{if }$G$\emph{ is a graph of order }$n$ and $q\left(  G\right)  >q\left(
T_{2}\left(  n\right)  \right)  ,$ \emph{then }$G$ \emph{contains a
triangle}$.$ While this result is expected, here $T_{2}\left(  n\right)  $ is
not the only extremal graph, unlike the case for $e\left(  G\right)  $ and
$\mu\left(  G\right)  $. An even greater rift is observed when we try to
translate the result of Bollob\'{a}s: now the premise $q\left(  G\right)
>q\left(  T_{2}\left(  n\right)  \right)  $ by no means implies the existence
of cycles other than triangles. Indeed, write $S_{n,1}^{+}$ for the star of
order $n$ with an additional edge. Clearly $S_{n,1}^{+}$ contains no cycles
other than triangles, although $q\left(  S_{n,1}^{+}\right)  >q\left(
T_{2}\left(  n\right)  \right)  .$ In fact, to guarantee existence of an odd
cycle of length $k,$ we need considerably larger $q\left(  G\right)  .$ Below,
we find the precise lower bound for $5$-cycles, and we state a conjecture for
general $k$ in the concluding remarks.

For even cycles, $q\left(  G\right)  $ and $\mu\left(  G\right)  $ seem to
behave alike, although we are able to prove this fact only for the $4$-cycle;
we state a general conjecture at the end of the paper.

\begin{theorem}
\label{thc4}Let $G$ be a graph of order $n\geq4.$ If $G$ contains no $C_{4}$,
then
\[
q\left(  G\right)  <q\left(  F_{n}\right)  ,
\]
unless $G=F_{n}.$
\end{theorem}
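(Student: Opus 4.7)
The strategy is to prove the contrapositive: assume $G$ is a $C_{4}$-free graph on $n\ge 4$ vertices with $q(G)\ge q(F_{n})$, and deduce $G=F_{n}$. Let $\mathbf{x}$ be the Perron eigenvector of $Q(G)$ associated with $q=q(G)$, normalized so that $\max_{v}x_{v}=x_{u}=1$ at some vertex $u$. The eigenvalue equation at $u$ gives $\sum_{v\sim u}x_{v}=q-d(u)$, while expanding the $u$-entry of $Q^{2}\mathbf{x}=q^{2}\mathbf{x}$, using $(Q^{2})_{uv}=d(u)+d(v)+|N(u)\cap N(v)|$ for $u\sim v$ and $(Q^{2})_{uv}=|N(u)\cap N(v)|$ for $u\ne v$, $u\not\sim v$, yields the key identity
\[
q^{2}-d(u)\,q \;=\; d(u) \;+\; \sum_{v\sim u}d(v)\,x_{v} \;+\; \sum_{v\ne u}|N(u)\cap N(v)|\,x_{v}.
\]

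In a $C_{4}$-free graph any two vertices share at most one common neighbor, and equivalently the $2$-paths starting at $u$ reach pairwise distinct endpoints, giving $\sum_{v\sim u}(d(v)-1)\le n-1$. Combining these with the crude bound $x_{v}\le 1$ in the identity above reduces it to the quadratic inequality $q^{2}-(d(u)+1)q-2(n-1)\le 0$. Since for odd $n$ the value $q(F_{n})$ is the larger root of $q^{2}-(n+2)q+2(n-1)=0$ (and satisfies an analogous identity for even $n$ arising from the equitable partition), substituting one relation into the other gives $(n+1-d(u))\,q(F_{n})\le 4(n-1)$; as $q(F_{n})>n-1$, this forces $d(u)\in\{n-2,n-1\}$.

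A structural case analysis then finishes the proof. If $d(u)=n-1$, then $u$ dominates $G$ and $C_{4}$-freeness forces $G[V\setminus\{u\}]$ to be $P_{3}$-free (else a path $a\!-\!b\!-\!c$ would give $a,c$ the two common neighbors $u,b$), hence a disjoint union of $k$ edges and $n-1-2k$ isolated vertices; equivalently, $G$ is a ``book'' of $k$ triangles and $n-1-2k$ pendant edges through $u$, and the natural equitable partition (center/triangle-vertex/pendant-vertex) produces a scalar equation whose largest root is strictly increasing in $k$, attaining its maximum exactly at $G=F_{n}$. If $d(u)=n-2$ and $v^{*}$ is the unique vertex not adjacent to $u$, then $N(v^{*})\subseteq N(u)$ together with $|N(u)\cap N(v^{*})|\le 1$ force $d(v^{*})\le 1$; the same ``no $P_{3}$'' analysis applied to $G[N(u)]$ pins $G$ down to a short list of near-book candidates, each of which is directly comparable to $F_{n}$ and satisfies $q(G)<q(F_{n})$, contradicting the hypothesis.

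The main obstacle is precisely this dichotomy: the quadratic bound, which rests on the loose estimate $x_{v}\le 1$, is \emph{just} strong enough to force $d(u)\ge n-2$ but not $d(u)=n-1$. Consequently the borderline case $d(u)=n-2$ is not ruled out by the inequality alone and must be dispatched by a separate structural comparison, where cleanly computing $q(G)$ for the handful of near-book candidates (isolated $v^{*}$, or $v^{*}$ pendant onto a matched or unmatched neighbor of $u$) and verifying each is strictly less than $q(F_{n})$ is the finicky part of the argument.
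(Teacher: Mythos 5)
Your argument is essentially correct but follows a genuinely different route from the paper. The paper's proof is much shorter: it splits only on $\Delta=n-1$ versus $\Delta\le n-2$. In the first case $C_{4}$-freeness makes $G$ a connected spanning subgraph of $F_{n}$, so $q(G)<q(F_{n})$ unless $G=F_{n}$ by strict monotonicity under edge addition (no equitable-partition computation over the family of books is needed). In the second case it applies Merris's bound (\ref{Min}) together with the same disjointness observation you use (every $v\neq u$ has at most one neighbour in $\Gamma(u)$, so $e(\Gamma(u),V\setminus\Gamma(u))\le n-1$), and convexity of $x+\frac{n-1}{x}$ on $[2,n-2]$ then gives $q(G)\le n+\frac{1}{n-2}<q(F_{n})$ uniformly --- no structural analysis of the extremal neighbourhood is required. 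Your $Q^{2}$-eigenvector identity is a legitimate alternative (it is the standard engine behind the adjacency-spectral $C_{4}$ results), and your derivation of $q^{2}-(d(u)+1)q-2(n-1)\le 0$ checks out, but it buys less per unit effort: it only forces $d(u)\ge n-2$, so you are left with the borderline case $d(u)=n-2$, which the Merris route simply never sees. Two caveats on your write-up: (i) the substitution step $(n+1-d(u))\,q(F_{n})\le 4(n-1)$ uses the quadratic satisfied by $q(F_{n})$ for odd $n$; for even $n$ the characteristic equation is a cubic, so you should instead argue directly that $d(u)\le n-3$ forces $q\le\frac{n-2+\sqrt{(n+2)^{2}-8}}{2}<n<q(F_{n})$, which rescues the dichotomy for both parities; (ii) the final enumeration of the near-book candidates in the $d(u)=n-2$ case is asserted rather than carried out --- the claims are true (e.g.\ Merris's bound already gives $q(G)\le n$ for each such candidate when $n$ is not too small), but as written this is the one step of your proof that is a sketch rather than a proof.
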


The following proposition adds some numerical estimates to Theorem \ref{thc4}.

\begin{proposition}
\label{pro1}If $n$ is odd, then
\[
q\left(  F_{n}\right)  =\frac{n+2+\sqrt{\left(  n-2\right)  ^{2}+8}}{2},
\]
and satisfies
\begin{equation}
n+\frac{2}{n-1}<q\left(  F_{n}\right)  <n+\frac{2}{n-2}. \label{bodd}%
\end{equation}

If $n$ is even, then $q\left(  F_{n}\right)  $ is the largest root of the
equation
\[
x^{3}-\left(  n+3\right)  x^{2}+3nx-2n+4=0,
\]
and satisfies%
\begin{equation}
n+\frac{2}{n}<q\left(  F_{n}\right)  <n+\frac{2}{n-1}. \label{beven}%
\end{equation}

\end{proposition}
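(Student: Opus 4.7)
\medskip
\noindent
\textbf{Proof proposal.} The plan is to exploit the fact that $F_n$ has a rich automorphism group, so in both parity cases the vertex set admits an equitable partition and $q(F_n)$ can be read off as the largest root of the characteristic polynomial of a small quotient matrix. Since $F_n$ is connected, $Q(F_n)$ is nonnegative and irreducible; Perron--Frobenius gives a unique (up to scaling) strictly positive eigenvector attached to $q(F_n)$, and invariance of the eigenspace under the automorphism group forces this eigenvector to be constant on each cell. Therefore $q(F_n)$ is precisely the Perron eigenvalue of the quotient matrix of $Q(F_n)$ with respect to that partition.

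For odd $n$, I use the partition $\{c\}\cup\{\text{non-centers}\}$, where $c$ is the center. Reading off degrees ($n-1$ at $c$, $2$ at each leaf) and intra-/inter-cell adjacencies gives the quotient matrix
\[
B=\begin{pmatrix} n-1 & n-1 \\ 1 & 3 \end{pmatrix},
\]
whose characteristic polynomial is $q^{2}-(n+2)q+2(n-1)$. Solving the quadratic and selecting the larger root yields the closed form $q(F_n)=\bigl(n+2+\sqrt{(n-2)^{2}+8}\bigr)/2$. The bounds in~\eqref{bodd} then reduce, after rationalizing, to the identity $q(F_n)-n=4\big/\bigl(\sqrt{(n-2)^{2}+8}+(n-2)\bigr)$: the upper bound follows from $\sqrt{(n-2)^{2}+8}>n-2$, and the lower bound is equivalent to $n^{2}>(n-2)^{2}+8$, i.e.\ $n>3$, which is automatic since odd $n\geq 4$ means $n\geq 5$.

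For even $n$, I refine the partition to $\{c\}\cup\{p\}\cup\{\text{other leaves}\}$, where $p$ is the new pendant. The corresponding quotient matrix is
\[
B=\begin{pmatrix} n-1 & 1 & n-2 \\ 1 & 1 & 0 \\ 1 & 0 & 3 \end{pmatrix},
\]
and a routine cofactor expansion (say along the second row) gives $\det(xI-B)=x^{3}-(n+3)x^{2}+3nx-2n+4$, establishing the stated cubic. Write $f(x)$ for this polynomial. To derive~\eqref{beven}, I evaluate $f$ at the two putative endpoints; direct expansion yields
\[
f\!\left(n+\tfrac{2}{n}\right)=-\frac{2(n-2)(n^{2}-2n+2)}{n^{3}}<0,\qquad
f\!\left(n+\tfrac{2}{n-1}\right)=\frac{4(n^{2}-3n+4)}{(n-1)^{3}}>0,
\]
for $n\geq 4$ (both quadratic factors are positive as $n^{2}-2n+2=(n-1)^{2}+1$ and the discriminant of $n^{2}-3n+4$ is negative).

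To conclude from these sign computations I must know that $q(F_n)$ is the \emph{largest} root of $f$ and that both test points exceed the middle root $r_{2}$; otherwise the negativity of $f(n+2/n)$ would not pin it between $r_{2}$ and $q(F_n)$. This is the only delicate point of the plan. It is handled by observing $f(n)=4-2n<0$ for $n\geq 4$ together with the fact that all three roots of $f$ cannot exceed $n$: their sum equals $n+3$, so at most one root is as large as $n$, forcing $r_{1}\leq n$ and hence $r_{2}<n<r_{3}=q(F_n)$. Since $n+2/n$ and $n+2/(n-1)$ both lie above $n$, the signs of $f$ at these points directly translate into the desired strict inequalities~\eqref{beven}.
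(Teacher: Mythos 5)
Your proposal is correct, and it is more self-contained than the paper's proof while differing in all three computational steps. Where the paper simply cites \cite{MNRS} for the quadratic and the cubic, you derive them from the orbit (equitable) partition of $Q(F_n)$; your quotient matrices and the resulting characteristic polynomials check out in both parity cases, and the Perron--Frobenius argument correctly identifies $q(F_n)$ with the largest root of the quotient polynomial. For \eqref{bodd} the paper squares the candidate bounds and compares with $(n-2)^2+8$, while you rationalize $q(F_n)-n$; the two computations are equivalent and both reduce to the condition $n>3$. For \eqref{beven} both you and the paper evaluate the cubic at $n+2/n$ and $n+2/(n-1)$ (your factored values agree with the paper's), but you differ in how you exclude roots above the test points: the paper rewrites $p$ in powers of $x-n$ and notes it is increasing for $x\geq n$, whereas you place the middle root below $n$ using $f(n)=4-2n<0$ together with the root sum $n+3$. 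One step of that argument should be made explicit: from the sum alone, two roots $\geq n$ only force the third to be $\leq 3-n$, which is a contradiction only because all three roots of the quotient polynomial are eigenvalues of the positive semidefinite matrix $Q(F_n)$ and hence nonnegative --- a fact your equitable-partition setup supplies but which you never state. (Also, ``forcing $r_1\leq n$'' should read ``forcing $r_2\leq n$.'') With that sentence added your argument is complete; the paper's monotonicity trick avoids the issue at the cost of re-expanding $p$ about $x=n$.
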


Let now $S_{n,k}$ be the graph obtained by joining each vertex of $K_{k},$ the
complete graph of order $k,$ to each vertex of an independent set of order
$n-k;$ in other words, $S_{n,k}=K_{k}\vee\overline{K}_{n-k}.$

\begin{theorem}
\label{thc5}Let $n\geq6$ and let $G\ $be a graph of order $n.$ If $G$ contains
no $C_{5}$, then
\[
q\left(  G\right)  <q\left(  S_{n,2}\right)  ,
\]
unless $G=S_{n,2}.$
\end{theorem}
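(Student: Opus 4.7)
The plan is to proceed by combining the Perron eigenvector of $Q(G)$ with a structural analysis of neighborhoods around a carefully chosen edge. A preliminary computation using the equitable partition of $S_{n,2}$ into the two dominating vertices and the $n-2$ leaves yields the quadratic $\lambda^2-(n+2)\lambda+4=0$, so
\[
q(S_{n,2})=\tfrac{1}{2}\bigl(n+2+\sqrt{(n-2)(n+6)}\bigr),
\]
which lies strictly between $n+1$ and $n+2$. Now let $G$ be $C_5$-free of order $n\ge 6$ with $q(G)\ge q(S_{n,2})$; the goal is $G=S_{n,2}$. Every bipartite graph satisfies $q\le n$, and a triangle-free graph with an edge $uv$ satisfies $N(u)\cap N(v)=\emptyset$, so $d(u)+d(v)=|N(u)\cup N(v)|\le n$. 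Combining this with the standard bound $q(G)\le\max_{xy\in E}\{d(x)+d(y)\}$ gives $q(G)\le n<q(S_{n,2})$ whenever $G$ is triangle-free. Hence $G$ contains a triangle, and in fact some edge $uv$ with $d(u)+d(v)\ge n+2$.

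Fix such an edge $uv$ and partition $V\setminus\{u,v\}=A\sqcup B\sqcup C\sqcup D$, where
\[
A=N(u)\setminus N(v),\quad C=N(v)\setminus N(u),\quad B=N(u)\cap N(v),\quad D=V\setminus(N(u)\cup N(v)\cup\{u,v\}).
\]
The $C_5$-free hypothesis immediately gives two structural constraints, which I would record as a lemma:
(i) for every $a\in A$ and $c\in C$, $a$ and $c$ have no common neighbor, since any such $w\notin\{u,v,a,c\}$ would give the $C_5$ $u,a,w,c,v$;
(ii) if $|B|\ge 3$, then $B$ is independent, since an edge $b_1b_2$ in $B$ together with any third $b_3\in B$ produces the $C_5$ $u,b_1,b_2,v,b_3$.

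The core of the proof is then to show $A=C=D=\emptyset$. Let $x$ be a unit Perron eigenvector of $Q(G)$ and write down the eigenvalue equations $q\,x_w=d(w)x_w+\sum_{y\sim w}x_y$ at $u$, $v$, and at typical vertices of $A,B,C,D$. I would argue via a ``symmetrization'' move: for $w\in A\cup C\cup D$, form $G'$ from $G$ by deleting all edges at $w$ and joining $w$ to both $u$ and $v$. Constraint (i) is exactly what ensures $G'$ is still $C_5$-free (any new $C_5$ through $wu$ and $wv$ would require precisely a common neighbor of some $a\in A$ and $c\in C$, or an analogous configuration excluded by (i)). A Rayleigh-quotient comparison, using that $x_u$ and $x_v$ dominate because $u,v$ realize a maximum of $d(\cdot)+d(\cdot)$ over edges, then forces $x^TQ(G')x>q(G)$, contradicting extremality unless $w\in B$ already. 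Once $V\setminus\{u,v\}=B$, constraint (ii) forces $B$ to be independent, so $G=K_2\vee\overline{K}_{n-2}=S_{n,2}$, which also delivers the uniqueness.

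The hard part will be the symmetrization comparison: constraints (i)--(ii) are satisfied by many $C_5$-free graphs other than $S_{n,2}$, so combinatorics alone cannot finish the argument. The delicate points are (a) verifying rigorously that the move $w\mapsto\{u,v\}$-neighbor does not create a new $C_5$ for every class $A,C,D$, in particular for $w\in D$ where both new edges $wu$ and $wv$ are added; and (b) showing that the change in the Rayleigh quotient is strictly positive, which demands controlling $x_w$ against $x_u+x_v$ using the eigenvalue equation at $w$. This quantitative balance, tied to the explicit estimate $q(G)\ge q(S_{n,2})>n+1$, is where I expect the bulk of the calculation to concentrate.
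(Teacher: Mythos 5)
Your setup (the computation of $q(S_{n,2})$, the reduction to graphs containing an edge $uv$ with $d(u)+d(v)\geq n+2$ via the bound $q(G)\leq\max_{xy\in E}\{d(x)+d(y)\}$, and the partition into $A,B,C,D$) is sound, but the core of the argument --- the symmetrization step --- has a genuine gap, and in fact the step as described is false. If you delete all edges at $w$ and join $w$ to both $u$ and $v$, any new $C_{5}$ through $w$ must have the form $w,u,p,q,v,w$ where $u,p,q,v$ is a path of length $3$ in $G-w$; the obstruction is therefore an \emph{edge} $pq$ with $p\in N(u)\setminus\{v\}$ and $q\in N(v)\setminus\{u\}$, not a common neighbor of some $a\in A$ and $c\in C$. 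Your constraint (i) does not exclude this. Concretely, take $G$ to be $K_{4}$ on $\{u,v,b_{1},b_{2}\}$ with pendant vertices attached to $u$ and to $v$: this graph is $C_{5}$-free, has $d(u)+d(v)=n+2$, satisfies your (i) and (ii) vacuously ($|B|=2$), yet moving any pendant $w$ onto $\{u,v\}$ creates the $C_{5}$ given by $w,u,b_{1},b_{2},v,w$. So the combinatorial half of the symmetrization fails on legitimate configurations, and these would need a separate treatment that you have not supplied.

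The analytic half is equally unresolved and, as stated, doubtful. Writing $x^{T}Q(G)x=\sum_{ij\in E}(x_{i}+x_{j})^{2}$, your move deletes $d(w)-|\{u,v\}\cap N(w)|$ terms and adds at most two; when $d(w)$ is large there is no reason the net change should be positive, and ``$x_{u}$ and $x_{v}$ dominate'' is not enough to force it. You acknowledge both difficulties as ``the hard part,'' but they are precisely where the proof has to live, so what you have is a plan rather than a proof. For comparison, the paper avoids eigenvector surgery entirely: it isolates the dominating-vertex case (using the Chang--Tam classification together with the Das bound $q(G)\leq 2e(G)/(n-1)+n-2$ and the Erd\H{o}s--Gallai theorem for $P_{4}$-free neighborhoods), and otherwise applies the Merris-type bound $q(G)\leq\max_{u}\bigl(d(u)+\frac{1}{d(u)}\sum_{v\in\Gamma(u)}d(v)\bigr)$ to the maximizing vertex, counting edges inside and leaving $\Gamma(u)$ under the $C_{5}$-free hypothesis. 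That route is purely enumerative and closes all cases; if you want to salvage the symmetrization idea, you would at minimum need to rule out edges between $N(u)$ and $N(v)$ (which $C_{5}$-freeness does give you when $|B|\geq3$, but not when $|B|=2$) and replace the ``delete all edges at $w$'' move by one that provably does not decrease the Rayleigh quotient.
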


Note that for $n=5,$ there are two graphs without $C_{5}$ and with maximal
$Q$-index: one is $S_{5,2},$ and the other is $K_{4}$ with a dangling
edge.\medskip

The remaining part of the paper is organized as follows. In the next section
we give the proofs of Theorems \ref{thc4}, \ref{thc5} and Proposition
\ref{pro1}. In the concluding remarks we round up the general discussion and
state two conjectures.

\section{Proofs}

For graph notation and concepts undefined here, we refer the reader to
\cite{Bol98}. For introductory material on the signless Laplacian see the
survey of Cvetkovi\'{c} \cite{C10} and its references. In particular, let $G$
be a graph, and $X$ and $Y$ be disjoint sets of vertices of $G.$ We write:

- $V\left(  G\right)  $ for the set of vertices of $G,$ and $e\left(
G\right)  $ for the number of its edges;

- $G\left[  X\right]  $ for the graph induced by $X,$ and $e\left(  X\right)
$ for $e\left(  G\left[  X\right]  \right)  ;$

- $e\left(  X,Y\right)  $ for the number of edges joining vertices in $X$ to
vertices in $Y;$

- $\Gamma\left(  u\right)  $ for the set of neighbors of a vertex $u,$ and
$d\left(  u\right)  $ for $\left\vert \Gamma\left(  u\right)  \right\vert
.\medskip$

Write $P_{k}$ for a path of order $k,$ and recall that Erd\H{o}s and Gallai
\cite{ErGa59} have shown that if $G$ is a graph of order $n$ with no
$P_{k+2},$ then $e\left(  G\right)  \leq kn/2,$ with equality holding if and
only if $G$ is a union of complete graphs of order $k+1.$\bigskip

\begin{proof}
[\textbf{Proof of Proposition \ref{pro1}}]If n is odd, then by definition
\[
F_{n}=K_{1}\vee\left(  \frac{n-1}{2}\right)  K_{2}.
\]
Then the $Q$-index of $F_{n}$ (see \cite{MNRS}) is the largest root of the
equation
\[
x^{2}-\left(  n+2\right)  x+2\left(  n-1\right)  =0,
\]
that is,
\[
q\left(  F_{n}\right)  =\frac{n+2+\sqrt{\left(  n-2\right)  ^{2}+8}}{2}.
\]
An easy check shows that%
\[
\left(  n-2+\frac{4}{n-1}\right)  ^{2}<(n-2)^{2}+8<\left(  n-2+\frac{4}%
{n-2}\right)  ^{2},
\]
proving (\ref{bodd}).

If $n$ is even, then by definition
\[
F_{n}=K_{1}\vee\left(  K_{1}\cup\left(  \frac{n-2}{2}\right)  K_{2}\right)  .
\]
Then, its $Q$-index (see \cite{MNRS}) is the largest root of the polynomial
\begin{align*}
p(x) &  =x^{3}-(n+3)x^{2}+3nx-2n+4\\
&  =(x-n)^{3}+(2n-3)(x-n)^{2}+(n^{2}-3n)(x-n)-2n+4.
\end{align*}
Since $p(x)$ is an increasing function for $x\geq n$, we find that
\[
p\left(  n+\frac{2}{n}\right)  =\frac{-2}{n^{3}}\left(  n^{2}%
(n-4)+2(3n-2)\right)  <0
\]
and so,%
\[
q(F_{n})>n+\frac{2}{n}.
\]
On the other hand
\[
p\left(  n+\frac{2}{n-1}\right)  =\frac{4}{(n-1)^{2}}\left(  n-2+\frac{2}%
{n-1}\right)  >0,
\]
and it follows that
\[
q(F_{n})<n+\frac{2}{n-1},
\]
completing the proof of (\ref{beven}).
\end{proof}

\bigskip

The proofs of Theorem \ref{thc4} and Theorem \ref{thc5} will be based on a
careful analysis of the following bound on $q\left(  G\right)  ,$ which can be
traced back to Merris \cite{Mer98},
\begin{equation}
q\left(  G\right)  \leq\max_{u\in V\left(  G\right)  }d\left(  u\right)
+\frac{1}{d\left(  u\right)  }\sum_{v\in\Gamma\left(  u\right)  }d\left(
v\right)  \label{Min}%
\end{equation}

Let us also note that for every vertex $u\in V\left(  G\right)  ,$
\begin{equation}
\sum_{v\in\Gamma\left(  u\right)  }d\left(  v\right)  =2e\left(  \Gamma\left(
u\right)  \right)  +e\left(  \Gamma\left(  u\right)  ,V\backslash\Gamma\left(
u\right)  \right)  \label{aux}%
\end{equation}
\bigskip

\begin{proof}
[\textbf{Proof of Theorem \ref{thc4}}]Suppose that $G$ is a graph of order
$n\geq4,$ with no $C_{4}$, and let $\Delta$ be the maximum degree of $G.$ Note
first that if $\Delta=n-1$, then $G$ is a connected subgraph of $F_{n},$
hence, $q\left(  G\right)  <q\left(  F_{n}\right)  ,$ unless $G=F_{n}.$ Thus,
hereafter we shall assume that $\Delta\leq n-2.$

Let $u$ be a vertex for which the maximum in the right-hand side of
(\ref{Min}) is attained, and set for short $N=\Gamma\left(  u\right)  $. If
$d\left(  u\right)  =1,$ then%
\[
q\left(  G\right)  \leq d\left(  u\right)  +\frac{1}{d\left(  u\right)  }%
\sum_{v\in N}d\left(  v\right)  \leq1+\frac{\Delta}{1}\leq n-1<q\left(
F_{n}\right)  ,
\]
proving the assertion in this case. Thus, hereafter we shall assume that
$d\left(  u\right)  \geq2.$

Note that every vertex $v\in V\backslash\left\{  u\right\}  $ has at most one
neighbor in $N$ because $G$ has no $C_{4}.$ Therefore,
\[
e\left(  N,V\backslash N\right)  =d\left(  u\right)  +\left\vert
V\backslash\left(  N\cup\left\{  u\right\}  \right)  \right\vert =d\left(
u\right)  +n-d\left(  u\right)  -1=n-1,
\]
and so,
\[
\sum_{v\in N}d\left(  v\right)  \leq d\left(  u\right)  +e\left(
N,V\backslash N\right)  \leq d\left(  u\right)  +n-1,
\]
implying that,
\[
q\left(  G\right)  \leq d\left(  u\right)  +\frac{1}{d\left(  u\right)  }%
\sum_{v\in N}d\left(  v\right)  \leq1+d\left(  u\right)  +\frac{n-1}{d\left(
u\right)  }.
\]
Since the function
\[
f\left(  x\right)  =x+\frac{n-1}{x}%
\]
is convex for $x>0,$ its maximum in any closed interval is attained at one of
the ends of this interval. In our case, $2\leq d\left(  u\right)  \leq n-2,$
and so,%
\[
q\left(  G\right)  \leq d\left(  u\right)  +\frac{1}{d\left(  u\right)  }%
\sum_{v\in N}d\left(  v\right)  \leq1+\max\left\{  2+\frac{n-1}{2}%
,n-2+\frac{n-1}{n-2}\right\}  \leq n+\frac{1}{n-2}<q\left(  F_{n}\right)  ,
\]
completing the proof of Theorem \ref{thc4}.
\end{proof}

To simplify the proof of Theorem \ref{thc5}, we shall prove two auxiliary
statements. First, note the following relations.

\begin{proposition}
\label{pro2}If $n\geq4,$ then%
\begin{equation}
q\left(  S_{n,2}\right)  =\frac{n+2+\sqrt{n^{2}+4n-12}}{2}>n+2-\frac{4}{n+1}.
\label{b1}%
\end{equation}

\end{proposition}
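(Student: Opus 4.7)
My plan is to compute $q(S_{n,2})$ exactly using an equitable partition of $V(S_{n,2})$, and then to derive the lower bound by a straightforward algebraic reduction.

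First I would note that $S_{n,2} = K_2 \vee \overline{K}_{n-2}$ admits the obvious equitable partition $V(S_{n,2}) = V_1 \cup V_2$, where $V_1$ consists of the two vertices of $K_2$ (each of degree $n-1$) and $V_2$ consists of the $n-2$ vertices of the independent set (each of degree $2$). Every vertex of $V_1$ has exactly $1$ neighbor in $V_1$ and $n-2$ neighbors in $V_2$, and every vertex of $V_2$ has $0$ neighbors in $V_2$ and $2$ neighbors in $V_1$, so the quotient matrix of $Q(S_{n,2})$ with respect to this partition is
\[
B=\begin{pmatrix} n & n-2\\ 2 & 2\end{pmatrix}.
\]
Since $S_{n,2}$ is connected for $n\geq 4$, $Q(S_{n,2})$ is nonnegative and irreducible, and the Perron eigenvector is constant on $V_1$ and on $V_2$ by symmetry; hence $q(S_{n,2})$ is the largest eigenvalue of $B$. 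The characteristic polynomial of $B$ is $x^2-(n+2)x+4$, and the quadratic formula gives the displayed equality $q(S_{n,2}) = \tfrac{1}{2}\bigl(n+2+\sqrt{n^2+4n-12}\bigr)$.

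For the inequality $q(S_{n,2}) > n+2-\tfrac{4}{n+1}$, I would rewrite it as
\[
\sqrt{n^2+4n-12} > n+2-\frac{8}{n+1}.
\]
For $n\geq 4$ both sides are positive, so I may square. After expanding the right-hand side as $(n+2)^2 - \tfrac{16(n+2)}{n+1} + \tfrac{64}{(n+1)^2}$ and subtracting the left-hand side, the inequality reduces, after multiplying by $(n+1)^2/16$, to the equivalent statement $(n+2)(n+1)-4 > (n+1)^2$, i.e.\ $n>3$, which holds by hypothesis.

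I do not anticipate a real obstacle: the only step requiring care is keeping track of signs while squaring, which is legitimate since $n+2-\tfrac{8}{n+1}>0$ for $n\geq 4$. An alternative path would be to plug $x=n+2-\tfrac{4}{n+1}$ into the polynomial $x^2-(n+2)x+4$ and check that the value is negative, which would then force $q(S_{n,2})$ (the larger root) to exceed $x$; this is essentially the same calculation packaged differently.
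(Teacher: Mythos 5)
Your proposal is correct and takes essentially the same route as the paper: the paper likewise reduces to the characteristic equation $x^{2}-(n+2)x+4=0$ (you merely make the equitable-partition derivation of it explicit) and proves the lower bound by establishing $\sqrt{(n+2)^{2}-16}>n+2-\frac{8}{n+1}$, which is exactly your squaring computation written in completed-square form.
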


\begin{proof}
Since $S_{n,2}=K_{2}\vee\overline{K}_{n-2}$, its $Q$-index is the largest root
of equation
\[
x^{2}-(n+2)x+4=0,
\]
and so,
\[
q(S_{n,2})=\frac{n+2+\sqrt{n^{2}+4n-12}}{2}.
\]
Since $n\geq4$, we find that
\begin{align*}
\sqrt{n^{2}+4n-12}  &  =\sqrt{(n+2)^{2}-16}>\sqrt{(n+2)^{2}-16\left(
1+\frac{n-3}{(n+1)^{2}}\right)  }\\
&  =n+2-\frac{8}{n+1},
\end{align*}
implying (\ref{b1}).
\end{proof}

\bigskip

In the following proposition we prove a crucial case of Theorem \ref{thc5}.

\begin{proposition}
\label{pro3}Let $G$ be a graph of order $n\geq6,$ with no $C_{5}.$ If $G$ has
a vertex of degree $n-1$, then
\[
q\left(  G\right)  <q\left(  S_{n,2}\right)  ,
\]
unless $G=S_{n,2}.$
\end{proposition}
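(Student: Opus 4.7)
The plan is to set $u$ to be the vertex of $G$ with $d(u)=n-1$ and let $H=G-u$. Since $u$ is adjacent to every other vertex, any path $v_{1}v_{2}v_{3}v_{4}$ in $H$ would extend through $u$ to a $5$-cycle in $G$, so $H$ contains no $P_{4}$. A short case argument---any connected $P_{4}$-free graph on at least three vertices is either $K_{3}$ or has a universal vertex and is therefore a star $K_{1,r}$---then forces each component of $H$ to be $K_{3}$ or $K_{1,r}$ for some $r\ge 0$. If $\Delta(H)=n-2$, a vertex of $H$ is adjacent to every other vertex of $H$, and for $n\ge 6$ the $P_{4}$-freeness forbids any further edge, so $H=K_{1,n-2}$ and $G=S_{n,2}$ (the equality case). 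From this point on I assume $\Delta(H)\le n-3$; by the Erd\H{o}s-Gallai bound cited in the paper, $e(H)\le n-1$, with equality iff $H=tK_{3}$.

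Next I apply the Merris bound (\ref{Min}) together with identity (\ref{aux}) and compute $d(v)+(1/d(v))\sum_{w\in\Gamma(v)}d(w)$ at each vertex type of $G$: for a triangle vertex, $(n+14)/3$; for the centre of a $K_{1,r}$ component, $(r+3)+(n-3)/(r+1)$; for a leaf, $2+(n+r)/2$; for an isolated vertex, $n$; and for the apex $u$, $n+2e(H)/(n-1)$. Using $r\le\Delta(H)\le n-3$, $n\ge 6$, and the bound $q(S_{n,2})>n+2-4/(n+1)$ from Proposition \ref{pro2}, direct comparisons show each of the first four is strictly below $q(S_{n,2})$. For the apex, the sub-case $e(H)\le n-3$ gives $n+2e(H)/(n-1)\le n+2-4/(n-1)<n+2-4/(n+1)<q(S_{n,2})$, so Merris concludes the argument whenever $e(H)\le n-3$.

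The residual possibilities are $e(H)\in\{n-2,n-1\}$, which (by the structure above) force $H$ to be either $tK_{3}$, or $tK_{3}\cup K_{1}$, or $tK_{3}\cup K_{1,r}$ with $r\ge 1$ and, since $t\ge 1$ here, $r\le n-5$. In each of these sub-cases the partition of $V(G)$ into $\{u\}$, the triangle vertices, and the one or two additional cells from the remaining component is equitable, so $q(G)$ equals the largest eigenvalue of the resulting quotient matrix $Q_{\pi}$ of order at most $4$. Setting $q:=q(S_{n,2})$, which by Proposition \ref{pro2} satisfies $q^{2}=(n+2)q-4$, the strategy is to evaluate $p(\lambda):=\det(\lambda I-Q_{\pi})$ at $\lambda=q$, reduce using this quadratic identity (and its iterate $q^{3}=(n^{2}+4n)q-4(n+2)$ in the quartic case), and verify $p(q)>0$; since $p$ has positive leading coefficient and $q(G)$ is its largest root, this will force $q(G)<q$.

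I expect the three residual computations to yield clean factorizations: $p(q)=2(2n-4-q)$ for $H=tK_{3}$; $p(q)=2(n-5)(q-2)$ for $H=tK_{3}\cup K_{1}$; and $p(q)=2(n-r-2)[(n-5)q+2r]$ for $H=tK_{3}\cup K_{1,r}$ with $r\ge 1$. Each is manifestly positive in its valid range: the first because $q<2n-4$ reduces to $(n-4)^{2}>2$, which holds since $n=3t+1\ge 7$ in this sub-case; the second because $n=3t+2\ge 8$ forces $n-5\ge 3$ and $q>n>2$; the third because $r\le n-5$ gives $n-r-2\ge 3$, while $(n-5)q+2r>0$ for $n\ge 6$ and $r\ge 1$. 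The main obstacle will be the $4\times 4$ determinant expansion for $H=tK_{3}\cup K_{1,r}$, which must be carried out and then reduced modulo $q^{2}=(n+2)q-4$ with some care before the key factor $n-r-2$ emerges; this step is bookkeeping-heavy but conceptually routine, and once the three factorizations are in hand, the strict inequality $q(G)<q(S_{n,2})$ follows immediately.
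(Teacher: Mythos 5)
Your argument is correct in substance but follows a genuinely different route from the paper. The paper dispatches almost everything with two external results: the Chang--Tam theorem (a dominating vertex with $4\leq e(N)\leq n-2$ forces $S_{n,2}$ as the unique maximizer) and the Das bound $q(G)\leq 2e(G)/(n-1)+n-2$, which together with Proposition \ref{pro2} squeeze the remaining cases down to $e(N)=n-1$; Erd\H{o}s--Gallai then identifies $G=K_{1}\vee\frac{n-1}{3}K_{3}$, and a single quadratic computation finishes. You instead classify the $P_{4}$-free graph $H=G-u$ component-by-component (triangles and stars), kill everything with $e(H)\leq n-3$ by evaluating the Merris quantity at each vertex type, and handle the residual cases $H=tK_{3}$, $tK_{3}\cup K_{1}$, $tK_{3}\cup K_{1,r}$ via equitable quotient matrices. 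Your approach is longer and computation-heavier, but it is self-contained (no Chang--Tam, no Das) and yields the exact structure of all near-extremal graphs; I spot-checked your three factorizations (the quadratic case algebraically, the quartic case at $n=7$, $r=2$, where indeed $p(q)=12q+24=2\cdot3\,[2q+4]$) and they are consistent. One step needs tightening: $p(q)>0$ alone does not imply $q(G)<q$, since $q$ could in principle sit in a positivity interval strictly between two roots of $p$. You must rule this out, e.g.\ by noting that all eigenvalues of the quotient matrix are real, nonnegative, and sum to its trace (at most $2n+2$ in the worst case), so two of them cannot both exceed $q>n+1$; alternatively, show $p$ is increasing on $[q,\infty)$. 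This is routine but cannot be omitted. With that repair, and the determinant bookkeeping actually carried out, your proof is complete.
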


\begin{proof}
Let $u$ be a vertex of degree $n-1$ and write $N$ for the set of its
neighbors. As proved by Chang and Tam \cite{ChTa11}, if a graph $G$ has a
dominating vertex and $4\leq e\left(  N\right)  \leq n-2,$ then the maximum
$q\left(  G\right)  $ is attained uniquely for $G=S_{n,2}.$ So it remains to
consider the cases $e\left(  N\right)  \leq3$ and $e\left(  N\right)  \geq
n-1.$ Assume for a contradiction that $q\left(  S_{n,2}\right)  \leq q\left(
G\right)  $ and $G\neq S_{n,2}.$ By the inequality of Das \cite{Das04}, we
have
\[
q\left(  S_{n,2}\right)  \leq q\left(  G\right)  \leq\frac{2e\left(  G\right)
}{n-1}+n-2,
\]
and (\ref{b1}) implies that%
\[
n+2-\frac{4}{n+1}\leq\frac{2e\left(  G\right)  }{n-1}+n-2.
\]
After some rearrangement we get
\[
e\left(  N\right)  \geq n-1-2+\frac{4}{n+1}%
\]
and so $e\left(  N\right)  \geq n-2\geq4.$ Since $C_{5}\nsubseteq G,$ the
graph $G\left[  N\right]  $ contains no $P_{4},$ and so $e\left(  N\right)
\leq n-1,$ implying that $e\left(  N\right)  =n-1$. By the theorem of
Erd\H{o}s and Gallai, this is only possible if $G\left[  N\right]  $ is a
union of disjoint triangles, that is to say,%
\[
G=K_{1}\vee\left(  \frac{n-1}{3}\right)  K_{3}.
\]
Now, an easy calculation gives%
\[
q\left(  G\right)  =\frac{n+4+\sqrt{(n-4)^{2}+16}}{2}<q\left(  S_{n,2}\right)
,
\]
a contradiction completing the proof of Proposition \ref{pro3}.
\end{proof}

\bigskip

\begin{proof}
[\textbf{Proof of Theorem \ref{thc5}}]Let $G$ be a graph of order $n\geq6,$
with no $C_{5}$ and with
\begin{equation}
q\left(  G\right)  \geq q\left(  S_{n,2}\right)  \geq n+2-\frac{4}{n+1}.
\label{in1}%
\end{equation}
Assume for a contradiction that $G\neq S_{n,2},$ and so, in view of
Proposition \ref{pro3}, we can assume that $\Delta\left(  G\right)  \leq n-2.$

Let $u$ be a vertex for which the maximum in the right-hand side of
(\ref{Min}) is attained and set for short $N=\Gamma\left(  u\right)  $. Note
that (\ref{Min}) and (\ref{in1}) give%
\begin{equation}
n+2-\frac{4}{n+1}\leq d\left(  u\right)  +\frac{1}{d\left(  u\right)  }%
\sum_{v\in N}d\left(  v\right)  .\label{in2}%
\end{equation}
We shall deduce first that
\begin{equation}
e\left(  N\right)  \geq d\left(  u\right)  -1.\label{in3}%
\end{equation}
Indeed, a crude estimate gives%
\[
\frac{e\left(  N,V\backslash N\right)  }{d\left(  u\right)  }\leq
\frac{d\left(  u\right)  \left(  n-d\left(  u\right)  \right)  }{d\left(
u\right)  }\leq n-d\left(  u\right)  ,
\]
and (\ref{aux}) implies that
\[
d\left(  u\right)  +\frac{1}{d\left(  u\right)  }\sum_{v\in N}d\left(
v\right)  \leq d\left(  u\right)  +\frac{2e\left(  N\right)  }{d\left(
u\right)  }+n-d\left(  u\right)  =n+\frac{2e\left(  N\right)  }{d\left(
u\right)  }.
\]
Hence, in view of (\ref{in2}), we get
\[
e\left(  N\right)  \geq d\left(  u\right)  -\frac{2d\left(  u\right)  }%
{n+1}\geq d\left(  u\right)  -\frac{2\left(  n-1\right)  }{n+1}>d\left(
u\right)  -2,
\]
and (\ref{in3}) follows.

Consider first that case when $G\left[  N\right]  $ contains no cycles, and so
$G\left[  N\right]  $ is a tree; moreover, since $C_{5}\nsubseteq G,$ the
graph $G\left[  N\right]  $ contains no $P_{4},$ and so it is a star. Note
that to avoid $C_{5},$ every vertex in $V\backslash\left(  N\cup\left\{
u\right\}  \right)  $ may be joined to at most one vertex of $N.$ Therefore,%
\[
e\left(  N,V\backslash N\right)  \leq n-d\left(  u\right)  -1+d\left(
u\right)  =n-1,
\]
implying that%
\begin{align*}
n+2-\frac{4}{n+1}  &  \leq d\left(  u\right)  +\frac{1}{d\left(  u\right)
}\sum_{v\in N}d\left(  v\right)  \leq d\left(  u\right)  +\frac{2\left(
d\left(  u\right)  -1\right)  +n-1}{d\left(  u\right)  }\\
&  =d\left(  u\right)  +2+\frac{n-3}{d\left(  u\right)  }.
\end{align*}
Since the function
\[
f\left(  x\right)  =x+\frac{n-3}{x}%
\]
is convex for $x>0,$ its maximum in any closed interval is attained at the
ends of this interval. In our case, $1\leq d\left(  u\right)  \leq n-2,$ and
so,%
\[
d\left(  u\right)  +2+\frac{n-3}{d\left(  u\right)  }\leq2+\max\left\{
1+\frac{n-3}{1},n-2+\frac{n-3}{n-2}\right\}  \leq n+2-\frac{4}{n+1}<q\left(
S_{n,2}\right)  ,
\]
which contradicts (\ref{in2}). Hence, $G\left[  N\right]  $ contains cycles.
However, since $C_{5}\nsubseteq G,$ all cycles of $G\left[  N\right]  $ are
triangles. Let say, $x,y,z$ are the vertices of a triangle in $G\left[
N\right]  .$ Clearly, to avoid $C_{5},$ every vertex in $V\backslash\left(
N\cup\left\{  u\right\}  \right)  $ may be joined to at most one of the
vertices $x,y,z.$ This requirement reduces the bound on $e\left(
N,V\backslash N\right)  $ as follows
\[
e\left(  N,V\backslash N\right)  \leq\left(  n-d\left(  u\right)  \right)
d\left(  u\right)  -2\left(  n-d\left(  u\right)  -1\right)  .
\]
Hence, in view of (\ref{in1}),
\begin{align*}
n+2-\frac{4}{n+1}  &  \leq d\left(  u\right)  +\frac{1}{d\left(  u\right)
}\sum_{v\in N}d\left(  v\right)  =d\left(  u\right)  +\frac{2e\left(
N\right)  }{d\left(  u\right)  }+\frac{e\left(  N,V\backslash N\right)
}{d\left(  u\right)  }\\
&  \leq d\left(  u\right)  +\frac{2e\left(  N\right)  }{d\left(  u\right)
}+n-d\left(  u\right)  -\frac{2\left(  n-d\left(  u\right)  -1\right)
}{d\left(  u\right)  }\\
&  =n+2+\frac{2e\left(  N\right)  }{d\left(  u\right)  }-\frac{2\left(
n-1\right)  }{d\left(  u\right)  }.
\end{align*}
After some rearrangement we see that%
\[
e\left(  N\right)  \geq n-1-\frac{2d\left(  u\right)  }{n+1}\geq
n-1-\frac{2\left(  n-2\right)  }{n+1}=n-3+\frac{6}{n+1},
\]
which implies that
\[
e\left(  N\right)  \geq n-2.
\]
Since $G\left[  N\right]  $ contains no $P_{4},$ the Erd\H{o}s-Gallai theorem
says that
\[
d\left(  u\right)  =v\left(  G\left[  N\right]  \right)  \geq e\left(
N\right)  ,
\]
with equality holding if an only if $G\left[  N\right]  $ is a union of
disjoint triangles. Therefore, we have $d\left(  u\right)  =n-2$ and $e\left(
N\right)  =n-2,$ and so $G\left[  N\right]  $ is a union of disjoint
triangles. As above, we see that every vertex in $V\backslash\left(
N\cup\left\{  u\right\}  \right)  $ may be joined to at most one vertex of
each triangle, and so
\[
e\left(  N,V\backslash N\right)  \leq d\left(  u\right)  +\frac{d\left(
u\right)  }{3}<\frac{4\left(  n-2\right)  }{3}.
\]
Hence,%
\begin{align*}
\frac{n+2+\sqrt{n^{2}+4n-12}}{2}  &  \leq d\left(  u\right)  +\frac
{1}{d\left(  u\right)  }\sum_{v\in N}d\left(  v\right)  \leq d\left(
u\right)  +\frac{2e\left(  N\right)  }{d\left(  u\right)  }+\frac{e\left(
N,V\backslash N\right)  }{d\left(  u\right)  }\\
&  \leq n-2+\frac{2\left(  n-2\right)  +\frac{4\left(  n-2\right)  }{3}}%
{n-2}=n+\frac{4}{3}%
\end{align*}
which is a contradiction for $n\geq5.$ Theorem \ref{thc5} is proved.
\end{proof}

\section{Concluding remarks}

We have started the study of the maximum $Q$-index of graphs with forbidden
cycles. These questions are very different from the corresponding problems
about edges and spectral radius. The main peculiarity for the $Q$-index is
that it seems to behave similarly for odd and even forbidden cycles. Here is a
general conjecture for odd cycles.

\begin{conjecture}
Let $k\geq2$ and let $G$ be a graph of sufficiently large order $n.$ If $G$
has no $C_{2k+1},$ then
\[
q\left(  G\right)  <q\left(  S_{n,k}\right)  ,
\]
unless $G=S_{n,k}.$
\end{conjecture}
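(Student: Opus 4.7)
The plan is to mimic the strategy used for Theorem \ref{thc5} (the $k=1$ case) and extend it to general $k$. The main tool remains the Merris bound (\ref{Min}), applied together with structural consequences of the hypothesis $C_{2k+1}\not\subseteq G$. As a preliminary step, I would compute $q(S_{n,k})$ from the equitable partition $\{V(K_k),V(\overline{K}_{n-k})\}$, obtaining a quadratic whose largest root yields $q(S_{n,k})=\tfrac{1}{2}\bigl(n+2k-2+\sqrt{(n+2k-2)^2-8k(k-1)}\bigr)=n+2(k-1)-O(1/n)$, together with an explicit lower bound in the spirit of (\ref{b1}).

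Given a $C_{2k+1}$-free graph $G$ of order $n$ with $q(G)\geq q(S_{n,k})$, I would pick a vertex $u$ realizing the maximum in (\ref{Min}) and set $d=d(u)$, $N=\Gamma(u)$. The central combinatorial observation is that any path on $2k$ vertices in $N$ combines with $u$ to form a $C_{2k+1}$; hence $G[N]$ is $P_{2k}$-free. The Erd\H{o}s--Gallai theorem then gives $e(G[N])\leq(k-1)d$, with equality only when $G[N]$ is a disjoint union of copies of $K_{2k-1}$. Substituting into (\ref{Min}) via (\ref{aux}) yields
\[
q(G)\leq d+2(k-1)+\frac{e(N,V\setminus N)}{d}.
\]

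In the case $d\leq n-2$, I would sharpen the trivial estimate $e(N,V\setminus N)\leq d(n-d)$ using the hypothesis $C_{2k+1}\not\subseteq G$: if $w\in V\setminus(N\cup\{u\})$ is joined to $x,y\in N$ and $G[N]$ contains a suitable $x$-$y$ path, then a further neighbor of $w$ in $N$ closes a $C_{2k+1}$ through $u$, forcing either $d_N(w)$ or the length of the available $x$-$y$ paths in $G[N]$ to be small. Splitting on the structure of $G[N]$ (forest; short cycles only; longer cycles) in the spirit of the proof of Theorem \ref{thc5}, and using the convexity of $x\mapsto x+c/x$ on $2\leq d\leq n-2$, the displayed bound should drop strictly below $q(S_{n,k})$, a contradiction.

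The case $d=n-1$ is the main obstacle. Here $u$ is a dominating vertex and $G[N]$ is a $P_{2k}$-free graph on $n-1$ vertices; the goal is to show that $K_1\vee G[N]$ uniquely maximizes $q$ when $G[N]=K_{k-1}\cup\overline{K}_{n-k}$, i.e., $G=S_{n,k}$. For $k=1$ this step relied on the Chang--Tam theorem cited in Proposition \ref{pro3}, together with a direct comparison against $K_1\vee mK_3$. For general $k$ no analogue is available, and my plan is to establish a stability version of Chang--Tam: among graphs with a universal vertex and a $P_{2k}$-free residual graph, $S_{n,k}$ strictly maximizes $q$ when $e(G[N])$ lies in an intermediate range, while the boundary case $e(G[N])=(k-1)(n-1)$ --- forcing $G[N]$ to be a union of copies of $K_{2k-1}$ --- would be disposed of by computing $q(K_1\vee mK_{2k-1})$ from the join formula and comparing with $q(S_{n,k})$ directly. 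Proving this stability input for large $n$ is where the principal technical difficulty lies.
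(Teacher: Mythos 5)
This statement is one of the paper's concluding conjectures: the authors offer no proof of it, so there is no argument of theirs to compare yours against. Your proposal is a sensible research plan rather than a proof, and you say as much yourself. The sound parts are the preliminaries: the computation of $q(S_{n,k})$ as the largest root of $x^{2}-(n+2k-2)x+2k(k-1)=0$ is correct (it reduces to the paper's Proposition \ref{pro2} when $k=2$), and the observation that a path on $2k$ vertices inside $N=\Gamma(u)$ closes a $C_{2k+1}$ through $u$, so that $G[N]$ is $P_{2k}$-free and Erd\H{o}s--Gallai gives $e(N)\leq(k-1)d(u)$, is the right generalization of the mechanism in Theorem \ref{thc5}.

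The two gaps you flag are, however, genuine and not minor. First, in the dominating-vertex case the Merris bound cannot do the work on its own: evaluated on $S_{n,k}$ itself, the right-hand side of (\ref{Min}) is roughly $n+2k-2-k(k-1)/(n-1)$, which exceeds $q(S_{n,k})\approx n+2k-2-2k(k-1)/n$, so there exist non-extremal dominating-vertex graphs whose Merris value lies above the target. This is exactly why the paper had to import the Chang--Tam theorem for $k=2$, and the general-$k$ analogue you posit is an unproved (and apparently hard) statement, not a routine stability argument. Second, in the case $d(u)\leq n-2$ the trivial bound gives only $q(G)\leq n+2k-2$, so you must gain a term of order $k^{2}/n$ from the cross edges $e(N,V\setminus N)$; your sketch of how the absence of $C_{2k+1}$ restricts the neighbors of an outside vertex $w$ in $N$ is only gestured at. For $k=2$ the paper could split on ``$G[N]$ is a star'' versus ``$G[N]$ contains a triangle'' because $P_{4}$-free connected graphs are so constrained; for $k\geq3$ the class of $P_{2k}$-free graphs is far richer (they are unions of $K_{2k-1}$'s only in the Erd\H{o}s--Gallai equality case), and no analogous clean dichotomy is available. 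As it stands the proposal does not settle the conjecture.
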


To state the conjecture for even cycles, define $S_{n,k}^{+}$ as the graph
obtained by adding an edge to $S_{n,k}.$

\begin{conjecture}
Let $k\geq2$ and let $G$ be a graph of sufficiently large order $n.$ If $G$
has no $C_{2k+2},$ then
\[
q\left(  G\right)  <q\left(  S_{n,k}^{+}\right)  ,
\]
unless $G=S_{n,k}^{+}.$
\end{conjecture}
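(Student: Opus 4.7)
The plan is to extend the Merris-bound framework that drove the proofs of Theorems \ref{thc4} and \ref{thc5} to arbitrary $k \geq 2$. First, one must get a handle on $q(S_{n,k}^+)$: using the equitable partition of $S_{n,k}^+$ into $V(K_k)$, the two endpoints of the added edge inside $\overline{K}_{n-k}$, and the remaining $n - k - 2$ isolated vertices of $\overline{K}_{n-k}$, the $Q$-index becomes the largest root of an explicit $3 \times 3$ polynomial, from which one should extract an asymptotic estimate of the form $q(S_{n,k}^+) = n + k - O(1/n)$. Next, an analog of Proposition \ref{pro3} should be proved to dispose of the dominating-vertex case: if $\Delta(G) = n-1$, then the $C_{2k+2}$-free hypothesis caps the longest path in the neighborhood of the dominating vertex, and combined with a Das-type degree-sum inequality this should force $G = S_{n,k}^+$. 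After this reduction one may assume $\Delta(G) \leq n - 2$.

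Pick $u$ attaining the maximum in (\ref{Min}) and set $N = \Gamma(u)$. The central structural fact is that $G[N]$ contains no $P_{2k+1}$, since a path $a_0 a_1 \cdots a_{2k}$ in $N$ together with the edges $u a_0$ and $u a_{2k}$ closes into a $C_{2k+2}$. The Erd\H{o}s-Gallai theorem then gives $e(G[N]) \leq (2k-1)d(u)/2$, with equality only if $G[N]$ is a disjoint union of $K_{2k}$'s. A parallel bound on $e(N, V \setminus N)$ arises from the observation that, if an external vertex $v$ has two distinct neighbors $a, b \in N$, then any path of length $2k-2$ in $G[N \setminus \{a\}]$ starting at $b$ yields a $C_{2k+2}$ of the form $u, a, v, b, \ldots, u$; this restricts external connections whenever $G[N]$ is rich in long paths.

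Feeding these two upper bounds into (\ref{aux}) and (\ref{Min}), together with the assumption $q(G) \geq q(S_{n,k}^+)$, should yield an inequality that pins $d(u)$ near its extremal value and forces $G[N]$ into one of a small number of highly structured configurations (most plausibly a union of $k$-cliques, or a single $K_k$ together with pendants). A final local case check, in the style of the concluding steps of the proof of Theorem \ref{thc5}, should then identify $G$ with $S_{n,k}^+$.

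The main obstacle is the looseness of the structural information for $k \geq 2$. Whereas in Theorem \ref{thc5} the $P_4$-freeness of $G[N]$ collapsed $G[N]$ to a union of stars or triangles, $P_{2k+1}$-freeness permits dense blocks up to $K_{2k}$, so the Erd\H{o}s-Gallai bound $e(G[N]) \leq (2k-1)d(u)/2$ becomes far weaker relative to the gap between the Merris estimate and $q(S_{n,k}^+)$. The cross-edge restrictions also grow delicate, because the forbidden $C_{2k+2}$ configurations hinge on longer paths inside $G[N]$ whose existence depends on the precise distribution of edges within $N$. A pure Merris approach therefore seems insufficient for large $k$, and one likely needs a finer Perron-eigenvector argument, a stability or Kelmans-compression step, or a strengthening of the Erd\H{o}s-Gallai estimate sensitive to how long paths in $G[N]$ interact with external vertices.
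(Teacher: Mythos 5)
The statement you were asked to prove is one of the paper's two concluding \emph{conjectures}: the authors prove only the forbidden-$C_{4}$ and forbidden-$C_{5}$ cases and explicitly leave the even-cycle problem open for every $k\geq2$ (even $k=2$, i.e., forbidden $C_{6}$, is not settled there). So there is no proof in the paper to compare against, and your submission is, as you yourself concede in the last paragraph, a programme rather than a proof. Beyond that, two concrete points. First, your anchor estimate $q\bigl(S_{n,k}^{+}\bigr)=n+k-O(1/n)$ is wrong for $k\geq3$: the quotient matrix of the partition of $S_{n,k}$ into $V(K_{k})$ and the independent set has rows $(n+k-2,\ n-k)$ and $(k,\ k)$, whose largest eigenvalue is $n+2k-2-O(1/n)$, and adding one edge changes this only by $O(1/n)$; the two formulas coincide exactly at $k=2$, which is presumably how the slip entered. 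Every subsequent inequality in your plan is calibrated against the wrong target.

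Second, and more fundamentally, the Merris bound (\ref{Min}) provably cannot close the argument on its own. Your observation that $G[N]$ is $P_{2k+1}$-free is correct, and Erd\H{o}s--Gallai then gives $e(N)\leq(2k-1)d(u)/2$ with equality for disjoint unions of $K_{2k}$'s. But take $H=K_{1}\vee\frac{n-1}{2k}K_{2k}$ (for $n\equiv1\pmod{2k}$): this graph is $C_{2k+2}$-free, yet at its apex $u$ one has $d(u)=n-1$, $\sum_{v\in\Gamma(u)}d(v)=2k(n-1)$, so the right-hand side of (\ref{Min}) equals $n+2k-1$, which \emph{exceeds} $q\bigl(S_{n,k}^{+}\bigr)<n+2k-2$. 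Hence no chain of inequalities descending from (\ref{Min}) and your structural restrictions can show $q(H)<q\bigl(S_{n,k}^{+}\bigr)$; such graphs must be identified exactly and their $Q$-index computed or bounded directly, which is precisely why the paper needs the Chang--Tam theorem and the explicit evaluation of $q\bigl(K_{1}\vee\frac{n-1}{3}K_{3}\bigr)$ in Proposition \ref{pro3} already for $k=2$. For general $k$ the family of surviving configurations (a near-dominating vertex over an arbitrary $P_{2k+1}$-free graph with close to $(2k-1)n/2$ edges) is far larger, no analogue of Chang--Tam is available, and your cross-edge restriction is conditional on long paths existing inside $G[N]$, so it gives nothing in exactly the dense-clique cases that matter. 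The missing idea --- some form of stability, compression, or direct eigenvector analysis for this family --- is the whole difficulty, and your outline does not supply it.
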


For comparison we would like to reiterate the corresponding conjecture for the
spectral radius, stated in \cite{Nik10}.

\begin{conjecture}
\label{con}Let $k\geq2$ and let $G$ be a graph of sufficiently large order
$n.$ If $G$ has no $C_{2k+2},$ then
\[
\mu\left(  G\right)  <\mu\left(  S_{n,k}^{+}\right)  ,
\]
unless $G=S_{n,k}^{+}.$
\end{conjecture}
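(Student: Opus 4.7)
The plan is to adapt the Perron-vector-plus-structure methodology that has been used for the analogous $C_{2k+1}$-free problem; see \cite{Nik10}. Assume for contradiction that $G$ is a $C_{2k+2}$-free graph of sufficiently large order $n$ with $\mu(G) \geq \mu(S_{n,k}^+)$ and $G \neq S_{n,k}^+$. First I would pin down the target value: since $S_{n,k} = K_k \vee \overline{K}_{n-k}$ has an equitable partition with quotient matrix
\[
\begin{pmatrix} k-1 & n-k \\ k & 0 \end{pmatrix},
\]
one computes $\mu(S_{n,k}) = \frac{k-1 + \sqrt{(k-1)^2 + 4k(n-k)}}{2} = \sqrt{kn} + \frac{k-1}{2} + O(n^{-1/2})$. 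Adding a single edge changes $\mu$ by $O(n^{-1/2})$ via a rank-one perturbation estimate, so the hypothesis yields $\mu(G)^2 \geq kn + (k-1)\sqrt{kn} + O(1)$.

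Next I would use the normalized Perron eigenvector $\mathbf{x}$, with $\|\mathbf{x}\|_{\infty} = 1$, to identify a core set $W \subseteq V(G)$ of vertices having $x_v \geq c$ for a small positive constant $c$. The goal is to show that $|W| = k$, that $W$ induces a clique, and that every vertex of $W$ is adjacent to all but $o(n)$ vertices of $V \setminus W$. The upper bound $|W| \leq k$ uses the forbidden cycle: if there were $k+1$ vertices of Perron weight at least $c$, each would have degree $\Omega(n)$, so their pairwise common neighborhoods would each have size $\Omega(n)$, which allows extraction of a copy of $K_{k+1,k+1}$ and hence of $C_{2k+2}$. The lower bound $|W| \geq k$ follows from a Rayleigh-type estimate: if the heavy set has size at most $k-1$, then $G$ is close in structure to $K_{k-1} \vee H$ for a low-density graph $H$, giving $\mu(G) \leq \sqrt{(k-1)n} + o(\sqrt{n})$, which contradicts the lower bound above since $\sqrt{kn} - \sqrt{(k-1)n} = \Omega(\sqrt{n})$.

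Once $|W| = k$ is established and the bipartite structure between $W$ and $V \setminus W$ is nearly complete, the final step pins down $G[V \setminus W]$. The $C_{2k+2}$-free condition forces $G[V \setminus W]$ to contain at most one edge. Indeed, any path $a - b - c$ on three vertices in $V \setminus W$ closes to a $C_{2k+2}$ by threading through the $k$ apices of $W$ interleaved with $k-1$ additional vertices of $V \setminus W$, giving the total vertex count $3 + k + (k-1) = 2k+2$; likewise any two disjoint edges $ab, cd$ in $V \setminus W$ extend via the $k$ apices and $k-2$ auxiliary $(V \setminus W)$-vertices to a $C_{2k+2}$ on $4 + k + (k-2) = 2k+2$ vertices. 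Hence $G[V \setminus W]$ contains at most one edge, and since $\mu(S_{n,k}^+) > \mu(S_{n,k})$ strictly, we conclude $G = S_{n,k}^+$.

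The main obstacle is the concentration step that pins $|W|$ exactly to $k$ and forces the bipartite structure between $W$ and $V \setminus W$ to be truly complete. Spectral arguments naturally identify $k$ vertices of very large Perron entry, but ruling out a sub-polynomial collection of additional vertices of moderate Perron weight appears to require a supersaturation-style strengthening of the Bondy--Simonovits theorem, saying that any $C_{2k+2}$-free graph whose structure deviates substantially from $S_{n,k}^+$ must contain many near-copies of $C_{2k+2}$. Obtaining such stability for $\mathrm{ex}(n, C_{2k+2})$ uniformly in $k$, and transferring it cleanly to the spectral setting, is the technical crux, and is presumably why the conjecture remains open for all $k \geq 2$.
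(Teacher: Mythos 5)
The statement you were asked to prove is Conjecture \ref{con}, which the paper does not prove: it is restated from \cite{Nik10} purely for comparison with the two $Q$-index conjectures, and the paper's actual theorems concern only the $Q$-index for forbidden $C_4$ and $C_5$. So there is no paper proof to compare against, and your proposal --- as you yourself concede in the closing paragraph --- is a program rather than a proof. Beyond the gap you acknowledge, several concrete steps fail as written. First, the claim that a Perron entry $x_v\geq c$ forces degree $\Omega(n)$ is false at this level of generality: from $\mu x_u=\sum_{v\sim u}x_v\leq d(u)\left\Vert x\right\Vert _{\infty}$ and $\mu=\Theta(\sqrt{kn})$ you only get $d(u)=\Omega(\sqrt{n})$; upgrading heavy-entry vertices to degree $(1-o(1))n$ requires second-neighborhood walk counting combined with the $C_{2k+2}$-free edge bounds, and this is precisely the hard concentration work. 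Moreover, even degrees $\Omega(n)$ with small constants do not yield pairwise common neighborhoods of size $\Omega(n)$ (two sets of size $cn$ with $c<1/2$ can be disjoint), so the extraction of $K_{k+1,k+1}$ needs degrees genuinely close to $n$, not just linear. Second, in the step for $\left\vert W\right\vert \geq k$, the assertion that a small heavy set forces $G$ to be close to $K_{k-1}\vee H$ with $H$ of low density is unsupported: Bondy--Simonovits gives only $e(H)=O(n^{1+1/(k+1)})$, so a priori $\mu(H)$ could be as large as $n^{1/2+1/(2k+2)}\gg\sqrt{n}$, and one needs the sharper local counting of \cite{Nik10} to control the light part.

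Third, your endgame has a circularity: the threading arguments that kill a path $a\mbox{-}b\mbox{-}c$ or two disjoint edges $ab,cd$ in $G[V\setminus W]$ require those specific vertices to be adjacent to the relevant apices of $W$, but the structure you have at that point is only \emph{nearly} complete bipartite between $W$ and $V\setminus W$ --- and the exceptional vertices with missing apex-neighbors are exactly where stray edges can hide. (Once one does know $W$ is a $k$-clique and $G[V\setminus W]$ spans at most one edge, the conclusion is clean: $G\subseteq S_{n,k}^{+}$ and hence $\mu(G)\leq\mu(S_{n,k}^{+})$ with equality only for $G=S_{n,k}^{+}$; it is reaching that structure which is the crux.) For the record, the conjecture was eventually proved for sufficiently large $n$ by Cioab\u{a}, Desai and Tait, broadly along the template you describe, but with substantial additional machinery at precisely the steps flagged above; as it stands, your proposal does not establish the statement.
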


\bigskip

\textbf{Acknowledgement}\medskip

This work was started in 2010 when the second author was visiting the Federal
University of Rio de Janeiro, Brazil. He is grateful for the warm hospitality
of the Spectral Graph Theory Group in Rio de Janeiro. The first author was
partially supported by CNPq (the Brazilian Council for Scientific and
Technological Development).

\bigskip

\end{document}